\theoremstyle{plain}
\newtheorem*{thm}{Theorem}
\newtheorem*{cor}{Corollary}
\newtheorem*{prop}{Proposition}
\newcommand{\oX}{\overline{X}}
\newcommand{\ga}{\Gamma}
\newcommand{\G}{\mathbf G}
\newcommand{\Q}{{\mathbb Q}}
\newcommand{\La}{{\mathfrak a}}
\newcommand{\OO}{{\mathcal O}}
\DeclareMathOperator{\GL}{\mathbf G\mathbf L}
\newcommand{\rank}{\text{-rank}\:}
\begin{document}

\title[Topological Realization]{A Topological Realization of the Congruence Subgroup Kernel}

\author{John Scherk}
\address{Department of Computer and Mathematical Sciences\\University of
 Toronto Scarborough\\1265 Military Trail
\\Toronto, Ontario, M1C 1A4\\CANADA}
\email{scherk@math.toronto.edu}
\date{July 20, 2017}

\subjclass[2010]{Primary 20F34, 22E40, 22F30; Secondary 14M27, 20G30}

\dedicatory{To Kumar Murty on his 60th birthday}

\maketitle

\bigskip

\noindent A number of years ago, Kumar Murty pointed out to me that the computation of the fundamental group of a Hilbert modular surface (\cite{vdG},\,IV,\S 6), and the computation of the congruence subgroup kernel of $SL(2)$ (\cite{Se}) were surprisingly similar. We puzzled over this, in particular over the role of elementary matrices in both computations. We formulated a very general result on the fundamental group of a Satake compactification of a locally symmetric space. This lead to our joint paper \cite{JMSS} with Lizhen Ji and Les Saper on these fundamental groups. Although the results in it were intriguingly similar to the corresponding calculations of the congruence subgroup kernel of the underlying algebraic group in \cite{Ra2}, we were not able to demonstrate a direct connection (cf. \cite{JMSS}, \S 7). The purpose of this note is to explain such a connection. A covering space is constructed from inverse limits of reductive Borel-Serre compactifications. The congruence subgroup kernel then appears as the group of deck transformations of this covering. The key to this is the computation of the fundamental group in \cite{JMSS}. Notations and definitions are also taken from \cite{JMSS}.

\bigskip

\noindent {\bf 1.} Let $k$ be a number field and let $S$ be a finite set of places of $k$, which
contains the infinite places $S_\infty$. Let $\G$ be a connected, absolutely almost simple, and
simply connected algebraic group defined over $k$. 

Fix a faithful representation
\begin{equation*}
\rho\colon \G \longrightarrow \GL_N
\end{equation*}
defined over $k$. Let $\OO$ the ring of \emph{$S$-integers}, and set
\begin{equation*}
\G(\OO) = \rho^{-1}(\GL_N(\OO)) \subset \G(k) .
\end{equation*}
A subgroup $\ga \subset \G(k)$ is an \emph{$S$-arithmetic subgroup} if it is commensurable with $\G(\OO)$. 

For any nonzero ideal $\La \subseteq \OO$, set
\begin{equation*}
\ga(\La) = \{\,\gamma \in \G(k) \mid  \text{$\rho(\gamma) \in \GL_N(\OO)$, 
$ \rho(\gamma) \equiv I \pmod{\La}$}\,\}\ .
\end{equation*}
A subgroup $\ga \subset \G(k)$ is an \emph{$S$-congruence subgroup} if it contains $\ga(\La)$ as a subgroup of finite index for some ideal $\La \subseteq \OO$. These two definitions are independent of the choice of $\rho$. Let ${\mathfrak M_a}$, respectively ${\mathfrak M_c}$, be the set of $S$-arithmetic subgroups, respectively $S$-congruence subgroups, of $\G$. 

We recall the definition of the congruence subgroup kernel as explained in \cite{PrasadRapinchukSurvey}. Taking each of these sets to be a fundamental system of neighbourhoods of $1$, we define two topologies, ${\mathcal T}_a$, respectively 
${\mathcal T}_c$, on $\G(k)$. Let $\widehat G(a)$, respectively $\widehat G(c)$, denote the completions of $\G(k)$ in these topologies. The corresponding completions of $\G(\OO)$ are denoted by $\widehat G(\OO,a)$, respectively $\widehat G(\OO,c)$. 

Denote by ${\mathfrak N_a}$ the set of normal subgroups of finite index of $\G(\OO)$, and by ${\mathfrak N_c}$ the set of principal $S$-congruence subgroups $\G(\OO)$. These define the topologies ${\mathcal T}_a$, respectively 
${\mathcal T}_c$, on $\G(\OO)$ as well. Then we can regard $\widehat G(\OO,a)$ and $\widehat G(\OO,c)$ as inverse limits:
\begin{equation*}
\widehat G(\OO,a) = \varprojlim_{\ga \in \mathfrak N_a} \G(\OO)/\ga
\end{equation*}
and
\begin{equation*}
\widehat G(\OO,c) = \varprojlim_{\ga \in \mathfrak N_c} \G(\OO)/\ga\ .
\end{equation*}

Since every $S$-congruence subgroup is also $S$-arithmetic, we have homomorphisms $\widehat G(a) \rightarrow \widehat G(c)$ and
$\widehat G(\OO,a) \rightarrow \widehat G(\OO,c)$. They have a common kernel, $C(S, \G)$, called the \emph {congruence subgroup kernel} . In particular, we have the exact sequence
\begin{equation}\label{exseq}
1 \rightarrow C(S,G) \rightarrow \widehat G(\OO,a) \rightarrow \widehat G(\OO,c) \rightarrow 1\ . 
\end{equation}
The {\em congruence subgroup problem} asks whether every $S$-arithmetic subgroup of $\G(k)$ is an $S$-congruence subgroup.
The congruence subgroup kernel measures to what extent this is the case.

\bigskip

\noindent {\bf 2.} The reference for the definitions in this section is \cite{JMSS},\S 4. Set $\mathbf H = \operatorname{Res}_{k/\Q} \G$,
and let $X_\infty$ be the symmetric space associated to $\mathbf H$. For each $v\in S \setminus S_\infty$,
let $X_v$ be the Bruhat-Tits building of $\G(k_v)$. Set $X = X_\infty \times \prod_{v\in S\setminus S_\infty} X_v$. Then $X$ is a
contractible, locally compact metric space. The group $G$ acts isometrically on $X$. Any $S$-arithmetic subgroup $\ga \subset \G(k)$
is a discrete subgroup of $G$  and acts properly discontinuously on $X$.

Compactifications of the quotient space $\ga\backslash X$
are of great interest. One that is particularly useful is the \emph{reductive Borel-Serre compactification}. To construct it define the reductive
Borel-Serre bordification $\overline{X}^{RBS}$ of $X$ as in \cite{JMSS}*{2.4, 4.3}. The action of a subgroup $\ga \in \mathfrak M_a$ on $X$
extends to  $\overline{X}^{RBS}$ and the quotient  $\ga\backslash\overline{X}^{RBS}$ is a compact Hausdorff space, called the
reductive Borel-Serre compactification of $\ga\backslash X$.

We can construct from $\overline{X}^{RBS}$ spaces that are analogous to the groups $\widehat G(\OO,a)$ and $\widehat G(\OO,c)$ and a sequence of maps like \eqref{exseq}. Set
\begin{equation*}
\overline{X}^{RBS}_a = \varprojlim_{\ga \in \mathfrak M_a} \ga\backslash\overline{X}^{RBS}
= \varprojlim_{\ga \in \mathfrak N_a} \ga\backslash\overline{X}^{RBS}
\end{equation*}
and
\begin{equation*}
\overline{X}^{RBS}_c = \varprojlim_{\ga \in \mathfrak M_c} \ga\backslash\overline{X}^{RBS}
= \varprojlim_{\ga \in \mathfrak N_c} \ga\backslash\overline{X}^{RBS}
\end{equation*}
They are both compact Hausdorff spaces. Let $p$ denote the natural map
\begin{equation*}
\overline{X}^{RBS}_a \xrightarrow{p} \overline{X}^{RBS}_c\ .
\end{equation*}
\medskip

\begin{prop}
\label{fundgp}
\begin{equation*}
\pi_1(\overline{X}^{RBS}_a) \cong \varprojlim_{\substack{\ga \in \mathfrak M_a \\ \ga\, \text{neat}}} \ga/E\ga \ .
\end{equation*}
\end{prop}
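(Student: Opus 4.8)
The plan is to bootstrap from the single-group computation of \cite{JMSS}, namely that for a \emph{neat} arithmetic subgroup $\ga$ one has $\pi_1(\ga\backslash\overline{X}^{RBS})\cong\ga/E\ga$, and then to control the passage to the inverse limit. First I would observe that the neat subgroups are cofinal in $\mathfrak{M}_a$ and that, since $\G(\OO)$ is finitely generated, its finite-index normal subgroups form a \emph{countable} cofinal tower; so I may compute over a countable cofinal system of neat normal subgroups $\ga$. For $\ga'\subseteq\ga$ one has $E\ga'\subseteq E\ga$, so the bonding map $\ga'\backslash\overline{X}^{RBS}\to\ga\backslash\overline{X}^{RBS}$ induces on fundamental groups exactly the natural projection $\ga'/E\ga'\to\ga/E\ga$. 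Hence $\varprojlim_{\ga}\pi_1(\ga\backslash\overline{X}^{RBS})$ is precisely the group on the right-hand side of the Proposition, and there is a natural comparison homomorphism $\pi_1(\overline{X}^{RBS}_a)\to\varprojlim_{\ga}\pi_1(\ga\backslash\overline{X}^{RBS})$ which I must show is an isomorphism.

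For this I would use that an inverse limit commutes with mapping spaces: for any $K$, a continuous map $K\to\overline{X}^{RBS}_a$ is the same as a compatible family of maps $K\to\ga\backslash\overline{X}^{RBS}$. Taking $K=S^1$ and $K=D^2$ shows that loops and null-homotopies in $\overline{X}^{RBS}_a$ are exactly compatible families of loops and of null-homotopies downstairs. After replacing the branched-covering bonding maps by fibrations (using a CW or ANR model of $\ga\backslash\overline{X}^{RBS}$ furnished by the construction of \cite{JMSS}), this is encoded by the Milnor-type exact sequence
\[
1 \to \varprojlim^{1}_{\ga}\,\pi_2(\ga\backslash\overline{X}^{RBS}) \to \pi_1(\overline{X}^{RBS}_a) \to \varprojlim_{\ga} \pi_1(\ga\backslash\overline{X}^{RBS}) \to 1 ,
\]
in which the right-hand map is automatically surjective. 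Consequently the Proposition is \emph{equivalent} to the vanishing of $\varprojlim^{1}_{\ga}\pi_2(\ga\backslash\overline{X}^{RBS})$.

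To establish that vanishing I would verify the Mittag--Leffler condition for the inverse system $\{\pi_2(\ga\backslash\overline{X}^{RBS})\}_{\ga}$. Here the geometry of the reductive Borel--Serre compactification enters decisively: the interior $X$ is contractible, so all of $\pi_2$ is carried by the boundary strata, whose structure is described explicitly in \cite{JMSS}. I would use that description to show that the images of the transition maps $\pi_2(\ga'\backslash\overline{X}^{RBS})\to\pi_2(\ga\backslash\overline{X}^{RBS})$ stabilize---either by exhibiting these groups as finitely generated with eventually constant image, or by a direct geometric retraction onto the added strata. Granting Mittag--Leffler, the derived limit vanishes and the middle term of the sequence collapses onto $\varprojlim_{\ga}\ga/E\ga$, which is the assertion.

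The hard part is precisely this last step. The naive expectation that $\pi_1$ commutes with inverse limits is false in general: a nonzero $\varprojlim^{1}$ of second homotopy groups produces phantom loops in the limit, as already happens for suitable towers of simply connected spaces (for instance with $\pi_2\cong\Z$ and multiplication-by-$2$ bonding maps). Ruling out such pathology is the crux, and it is exactly where one must leverage the compactness of each $\ga\backslash\overline{X}^{RBS}$ together with the explicit combinatorial and geometric form of the RBS boundary from \cite{JMSS} to control $\pi_2$ along the tower. A secondary technical point, which I would settle first, is the fibrant replacement of the bonding maps so that the Milnor sequence is available; this is where a functorial CW/ANR model for $\ga\backslash\overline{X}^{RBS}$ is needed.
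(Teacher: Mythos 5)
Your first half coincides with the paper's proof: invoke \cite{JMSS}*{Corollary~5.3} to get $\pi_1(\ga\backslash\oX^{RBS})\cong\ga/E\ga$ for neat $\ga$, use cofinality of the neat subgroups in $\mathfrak M_a$, and identify the right-hand side of the Proposition with $\varprojlim_\ga \pi_1(\ga\backslash\oX^{RBS})$. (The paper in fact stops there, asserting the interchange $\pi_1(\varprojlim)\cong\varprojlim\pi_1$ without further comment; you are right to flag that this interchange is the real content.) The difficulty is that you do not then close the gap you have opened. On your own account the entire proof reduces to the vanishing of $\varprojlim^1_\ga\pi_2(\ga\backslash\oX^{RBS})$, and for that you offer only a plan (``I would verify Mittag--Leffler \dots either by exhibiting these groups as finitely generated with eventually constant image, or by a direct geometric retraction''). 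No property of $\pi_2(\ga\backslash\oX^{RBS})$ is actually established, and nothing you cite supplies one; contractibility of $X$ says nothing about $\pi_2$ of the compactified quotient. As written, the proposal establishes only that the comparison homomorphism exists.

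There is also a structural problem with the route itself. The Milnor sequence, with its automatic surjectivity onto $\varprojlim\pi_1$ and kernel $\varprojlim^1\pi_2$, computes the homotopy groups of the \emph{homotopy} inverse limit of a tower of fibrations. Replacing the bonding maps by fibrations changes the space: the actual inverse limit of a tower of compact Hausdorff spaces is a shape-theoretic object and need not be homotopy equivalent to the homotopy inverse limit (the solenoid, an inverse limit of circles, is the standard warning). Your observation that a map $K\to\oX^{RBS}_a$ is the same as a compatible family of maps $K\to\ga\backslash\oX^{RBS}$ does give the comparison map on $\pi_1$, but neither its surjectivity (a compatible family of homotopy \emph{classes} of loops need not be represented by a strictly compatible family of loops) nor its injectivity follows from the Milnor sequence applied to a fibrant replacement. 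A workable argument here should instead exploit what this tower actually is --- quotients of the single space $\oX^{RBS}$ by the nested groups $\ga$, with the free actions and covering-space structure provided by \cite{JMSS}*{Corollaries~5.2, 5.3} --- rather than generic tower-of-fibrations homotopy theory.
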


\begin{proof}
According to \cite{JMSS}*{Corollary~5.3}, if $\ga \in \mathfrak M_a$ and $ \ga$ is neat, then
\begin{equation*}
\pi_1 (\ga\backslash \overline{X}^{RBS}) \cong \ga/E\ga\ .
\end{equation*}
Since the groups $\ga \in \mathfrak M_a,\ \ga\ \text{neat}$ are cofinal  in $\mathfrak M_a$,
\begin{equation*}
\pi_1(\overline{X}^{RBS}_a) \cong \varprojlim_{\substack{\ga \in \mathfrak M_a \\ \ga\, \text{neat}}} \pi_1 (\ga\backslash \overline{X}^{RBS}) \ .
\end{equation*}
\end{proof}

For $\ga \in \mathfrak N_a$ there is a well-defined action on $\ga\backslash\overline{X}^{RBS}$ on the left, of $\G(\OO)$ and therefore of $ \G(\OO)/\ga$. This determines an action of $\widehat G(\OO,a)$ on $\overline{X}^{RBS}_a$. Similarly, 
$\widehat G(\OO,c)$ acts on $\overline{X}^{RBS}_c$. The map $p$ is equivariant with respect to these actions. It follows from the descriptions of $\widehat G(\OO,a)$ and $\widehat G(\OO,c)$ as inverse limits and from the definitions of  $\overline{X}^{RBS}_a$ and $\overline{X}^{RBS}_c$, that $C(S,G)$ acts transitively on each fibre of $p$. 

\bigskip

\noindent {\bf 3.} In \cite{Ra2}*{Theorem~ A, Corollary~ 1} and \cite{Margulis}*{2.4.6, I}, it is shown that  if $k\rank \G \ge 1$ and $S\rank\G \ge 2$, then for any $\ga \in \mathfrak M_a$, $E\ga \in \mathfrak M_a$. Furthermore, in this case there exists an ideal $\La \neq 0$ such that $E\ga(\La) \subset \ga$ (\cite{Ra1}*{(2.1)}). It follows that
\begin{equation*}
\widehat G(\OO,a) = \varprojlim_{\ga \in \mathfrak N_c} \G(\OO)/E\ga \ .
\end{equation*}
For any ideal $\La
\subseteq \OO$ consider the exact sequence
\begin{equation*}
1 \ \rightarrow\ \ga(\La)/E\ga(\La)\ \rightarrow
\ \G(\OO)/E\ga(\La)\ \rightarrow\ \G(\OO)/\ga(\La)
\ \rightarrow\ 1\ .
\end{equation*}
Taking projective limits over the ideals $\La$ and comparing the result with \eqref{exseq} we see that
\begin{equation} \label{csg}
C(S, G) = \varprojlim_{\ga \in \mathfrak N_c} \ga/E\ga \ .
\end{equation}
As well,
\begin{equation} \label{Xa}
\overline{X}^{RBS}_a = \varprojlim_{\ga \in \mathfrak M_c} E\ga \backslash \overline{X}^{RBS} \ .
\end{equation}

Under these rank assumptions, we obtain a simple topological realization of $C(S,G)$.
\begin{thm}
Assume that  $k\rank\, \G \ge 1$ and $S\rank\, \G \ge 2$. Then $\overline{X}^{RBS}_a$ is a simply connected covering of 
$\overline{X}^{RBS}_c$, and $C(S,G)$ acts on $\overline{X}^{RBS}_a$ as the group of deck transformations.
\end{thm}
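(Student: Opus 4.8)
The plan is to prove that $\overline{X}^{RBS}_a$ is simply connected and that the equivariant map $p$ presents it as the universal covering of $\overline{X}^{RBS}_c$ with $C(S,G)$ acting as the deck group. Simple connectivity I would settle using the Proposition, which gives $\pi_1(\overline{X}^{RBS}_a)\cong\varprojlim_{\ga\ \text{neat}}\ga/E\ga$, by showing that this inverse limit is trivial because its transition maps are cofinally trivial. Fix a neat $\ga_0\in\mathfrak M_a$. The rank hypotheses guarantee $E\ga_0\in\mathfrak M_a$, so $E\ga_0$ is itself $S$-arithmetic and hence contains a neat subgroup $\ga_1$ of finite index. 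Since $\ga_1\subseteq E\ga_0\subseteq\ga_0$, the transition homomorphism $\ga_1/E\ga_1\to\ga_0/E\ga_0$ is induced by the inclusion $\ga_1\hookrightarrow\ga_0$ and factors through the image of $\ga_1$ in $\ga_0/E\ga_0$, which is trivial; as $\ga_0$ was arbitrary the limit vanishes. Note that the rank hypotheses enter here only through the arithmeticity of $E\ga_0$.

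For the covering assertion I would use the presentations $\overline{X}^{RBS}_c=\varprojlim_{\ga\in\mathfrak M_c}\ga\backslash\overline{X}^{RBS}$ and, by \eqref{Xa}, $\overline{X}^{RBS}_a=\varprojlim_{\ga\in\mathfrak M_c}E\ga\backslash\overline{X}^{RBS}$, so that $p$ is the inverse limit of the finite quotient maps $E\ga\backslash\overline{X}^{RBS}\to\ga\backslash\overline{X}^{RBS}$. Because $C(S,G)\subseteq\widehat G(\OO,a)$ acts on $\overline{X}^{RBS}_a$, maps to $1$ in $\widehat G(\OO,c)$, and acts transitively on the fibres of $p$, the orbits of this action are exactly the fibres of $p$; hence $p$ induces a continuous bijection $C(S,G)\backslash\overline{X}^{RBS}_a\to\overline{X}^{RBS}_c$ of compact Hausdorff spaces, which is a homeomorphism. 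It then remains to verify that $C(S,G)$ acts freely and properly discontinuously on $\overline{X}^{RBS}_a$; granting this, the simple connectivity established above will identify $p$ with the universal covering and $C(S,G)$ with its group of deck transformations.

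Freeness is the crux, and I would reduce it to a single level. Componentwise, $c=(c_\ga)\in C(S,G)$ can fix a point $(x_\ga)$ of $\overline{X}^{RBS}_a$ only if each $c_\ga\in\ga/E\ga$ fixes $x_\ga$ in $E\ga\backslash\overline{X}^{RBS}$; since neat congruence subgroups are cofinal in $\mathfrak M_c$, it suffices to prove that $\ga/E\ga$ acts freely on $E\ga\backslash\overline{X}^{RBS}$ for neat congruence $\ga$. This is where the structure of the reductive Borel-Serre boundary enters, and I expect it to be the main obstacle: one must show that the stabilizer in $\ga$ of every point of $\overline{X}^{RBS}$ lies in $E\ga$. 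For interior points this is immediate from neatness, while for a point in the stratum attached to a rational parabolic $\P$ the stabilizer is, by the structure of $\overline{X}^{RBS}$ together with the neatness of the induced arithmetic subgroup of the Levi quotient, exactly $\ga\cap\N_{\P}$; as this consists of $S$-integral unipotent elements it lies in $E\ga$. Granting this, each map $E\ga\backslash\overline{X}^{RBS}\to\ga\backslash\overline{X}^{RBS}$ is a genuine regular covering and $C(S,G)$ acts freely on the inverse limit. Finally, under the present rank hypotheses $C(S,G)$ is known to be finite, so this free action of a finite group on the compact Hausdorff space $\overline{X}^{RBS}_a$ is automatically properly discontinuous, and $p$ is the associated covering map, completing the identification.
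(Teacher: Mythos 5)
Your proposal is correct and follows essentially the same route as the paper: simple connectivity comes from the Proposition together with the cofinality of the subgroups $E\ga$ (which the rank hypotheses make $S$-arithmetic), and the covering/deck-group statement comes from the freeness of the $\ga/E\ga$-action on $E\ga\backslash\overline{X}^{RBS}$ at each finite level combined with the inverse-limit identifications \eqref{csg} and \eqref{Xa}. The only substantive differences are that the finite-level freeness you sketch (boundary stabilizers equal $\ga\cap\N_{\P}\subseteq E\ga$ for neat $\ga$) is precisely \cite{JMSS}*{Corollary~5.2}, which the paper cites rather than reproves, and that your appeal to the finiteness of $C(S,G)$ (a consequence of Raghunathan's centrality theorem) to secure proper discontinuity is an external input the paper's proof does not explicitly invoke.
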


\begin{proof}
Note that if $\ga$ is neat, then by \cite{JMSS}*{Corollary~5.2}, $\ga/E\ga$ acts freely on $E\ga \backslash \overline{X}^{RBS}$. So
\begin{equation*}
\ga(\La)/E\ga(\La)\ \rightarrow\ E\ga(\La)\backslash \overline{X}^{RBS}\ \rightarrow\ \ga(\La)\backslash \overline{X}^{RBS}
\end{equation*}
is a map of covering spaces, with $\ga(\La)/E\ga(\La)$ the group of deck transformations.
Now \eqref{csg} and \eqref{Xa} imply that $C(S, G)$ acts freely on $\overline{X}^{RBS}_a$. Taking projective limits as above, it follows that 
\begin{equation*}
C(S, G)\ \rightarrow\ \overline{X}^{RBS}_a\ \xrightarrow{p}\ \overline{X}^{RBS}_c
\end{equation*}
is a map of covering spaces, with $C(S, G)$ the group of deck transformations.

Since the groups $E\ga \in \mathfrak M_a,\ \ga\ \text{neat}$, are cofinal under the rank assumption,  the Proposition implies that \ref{fundgp} $\overline{X}^{RBS}_a$ is simply connected.
\end{proof}
\medskip

\begin{cor}
\begin{equation*}
C(S,\G) \cong  \pi_1(\overline{X}^{RBS}_c) \cong \varprojlim\limits_{\ga \in \mathfrak N_c} \pi_{1}(\ga\backslash\oX^{RBS})\ .
\end{equation*}
\end{cor}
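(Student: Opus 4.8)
The plan is to read off both isomorphisms from the Theorem together with the inverse-limit description \eqref{csg} of the congruence subgroup kernel, so that the corollary becomes essentially a repackaging of results already in hand. For the first isomorphism I would argue as follows. By the Theorem, $p\colon \overline{X}^{RBS}_a \to \overline{X}^{RBS}_c$ is a simply connected covering and $C(S,\G)$ acts as its group of deck transformations. A simply connected covering is a universal covering, and for a universal covering the deck transformation group is canonically isomorphic to the fundamental group of the base; this yields $C(S,\G) \cong \pi_1(\overline{X}^{RBS}_c)$. The only point requiring attention is that $\overline{X}^{RBS}_c$ be connected and locally well behaved enough for this standard identification to apply, but this is exactly the covering-space framework already set up in the proof of the Theorem, so nothing new is needed here.

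For the second isomorphism I would exploit the cofinality of neat subgroups, exactly as in Proposition~\ref{fundgp}. The neat groups in $\mathfrak N_c$ are cofinal, and since an inverse limit over a directed system is computed by any cofinal subsystem, it suffices to take the limit over neat $\ga \in \mathfrak N_c$. For such $\ga$, \cite{JMSS}*{Corollary~5.3} gives $\pi_1(\ga\backslash\oX^{RBS}) \cong \ga/E\ga$, and these isomorphisms are natural with respect to the inclusions defining the bonding maps, hence assemble into an isomorphism of inverse systems. Passing to the limit therefore yields
\begin{equation*}
\varprojlim_{\ga \in \mathfrak N_c} \pi_1(\ga\backslash\oX^{RBS}) \cong \varprojlim_{\substack{\ga \in \mathfrak N_c \\ \ga\ \text{neat}}} \ga/E\ga \cong \varprojlim_{\ga \in \mathfrak N_c} \ga/E\ga,
\end{equation*}
and by \eqref{csg} the right-hand side is precisely $C(S,\G)$. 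Combining this with the first isomorphism $C(S,\G) \cong \pi_1(\overline{X}^{RBS}_c)$ closes the chain of three isomorphisms asserted in the corollary.

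I expect the main obstacle to be the interchange of $\pi_1$ with the inverse limit. Rather than proving directly that $\pi_1\bigl(\varprojlim_\ga \ga\backslash\oX^{RBS}\bigr)$ coincides with $\varprojlim_\ga \pi_1(\ga\backslash\oX^{RBS})$ --- an interchange that can fail for inverse limits of spaces which are not locally nice --- I would route the argument through $C(S,\G)$: the Theorem supplies $\pi_1(\overline{X}^{RBS}_c) \cong C(S,\G)$ on one side, while \eqref{csg} together with the neat identification supplies $C(S,\G) \cong \varprojlim_\ga \pi_1(\ga\backslash\oX^{RBS})$ on the other. In this way the delicate topological interchange is \emph{obtained} as a consequence of the covering-space structure established in the Theorem, instead of being assumed outright. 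The remaining checks are routine: that the neat congruence subgroups are genuinely cofinal in $\mathfrak N_c$, and that the isomorphisms of \cite{JMSS}*{Corollary~5.3} are compatible with inclusions of neat subgroups, so that all the inverse limits above are taken over the same cofinal system.
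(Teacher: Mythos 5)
Your argument is correct. Note first that the paper states this Corollary without any proof at all (it only says ``Compare \cite{JMSS}*{\S7 (12)}''), so the comparison is with the deduction the author evidently intends rather than with a written argument. Your first isomorphism is exactly that intended deduction: the Theorem exhibits $p\colon \overline{X}^{RBS}_a \to \overline{X}^{RBS}_c$ as a simply connected covering on which $C(S,\G)$ acts freely and transitively on fibres, so the universal-cover/deck-group correspondence gives $C(S,\G)\cong\pi_1(\overline{X}^{RBS}_c)$; you are right to flag that this silently uses connectivity and local niceness of $\overline{X}^{RBS}_c$, a point the paper also leaves implicit. For the second isomorphism there are two available routes: repeat the Proposition's argument with $\mathfrak N_c$ in place of $\mathfrak N_a$, which requires asserting once more that $\pi_1$ commutes with the inverse limit of the quotients $\ga\backslash\oX^{RBS}$; or, as you do, identify $\varprojlim_{\ga\in\mathfrak N_c}\pi_1(\ga\backslash\oX^{RBS})$ with $\varprojlim_{\ga\in\mathfrak N_c}\ga/E\ga = C(S,\G)$ via cofinality of neat principal congruence subgroups, the naturality of \cite{JMSS}*{Corollary~5.3}, and equation \eqref{csg}, and then close the triangle using the first isomorphism. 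Your route is the more careful one: the interchange of $\pi_1$ with the inverse limit for $\overline{X}^{RBS}_c$ comes out as a \emph{consequence} instead of being invoked a second time. (It is not avoided entirely, of course: the Theorem's claim that $\overline{X}^{RBS}_a$ is simply connected already rests on that interchange for the $\mathfrak N_a$-limit via the Proposition, but taking the Theorem as given is legitimate for a corollary.) The remaining checks you list --- cofinality of neat subgroups in $\mathfrak N_c$ and compatibility of the isomorphisms $\pi_1(\ga\backslash\oX^{RBS})\cong\ga/E\ga$ with the bonding maps --- are standard and are exactly what needs to be said.
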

\noindent Compare \cite{JMSS}*{\S7 (12)}

\begin{bibdiv}
\begin{biblist}

\bib{JMSS}{article}{
 author={Ji, Lizhen},
 author={Murty, V. Kumar},
 author={Saper, Leslie},
 author={Scherk, John},
 title={The Fundamental Group of Reductive Borel-Serre and Satake Compactifications},
 journal={Asian Journal of Mathematics},
 volume={19},
 date={2015},
 number={3},
 pages={465--486},
 issn={1093-6106},

 }

 \bib{PrasadRapinchukSurvey}{article}{
  author={Prasad, Gopal},
  author={Rapinchuk, Andrei S.},
  title={Developements on the congruence subgroup problem after the work
    of Bass, Milnor and Serre},
  eprint={arXiv:0809.1622 [math.NT]},
  book={
    author={Milnor, John},
    title={Collected Papers of John Milnor. V. Algebra},
    editor={Bass, Hyman},
    editor={Lam, T. Y.},
    publisher={American Mathematical Society},
    place={Providence, RI},
    date={2011},
  },
}

\bib{Margulis}{article}{
  author={Margulis, G. A.},
  title={Finiteness of quotient groups of discrete subgroups},
    journal={Funct. Anal. Appl.},
    pages = {178-187},
    volume = {13},
    number = {3},
    date={1979},
    issn = {0016-2663},
}

\bib{Ra1}{article}{
  author={Raghunathan, M. S.},
  title={On the congruence subgroup problem},
  journal={Inst. Hautes \'Etudes Sci. Publ. Math.},
  volume={46},
  date={1976},
  pages={107--161},
  issn={0073-8301},
}

\bib{Ra2}{article}{
  author={Raghunathan, M. S.},
  title={On the congruence subgroup problem. II},
  journal={Invent. Math.},
  volume={85},
  date={1986},
  number={1},
  pages={73--117},
  issn={0020-9910},
}

\bib{Se}{article}{
  author={Serre, Jean-Pierre},
  title={Le probl\`eme des groupes de congruence pour $\mathbf{SL}_2$},
  journal={Ann. of Math. (2)},
  volume={92},
  date={1970},
  pages={489--527},
  issn={0003-486X},

}

\bib{vdG}{book}{
  author={van der Geer, G.},
 title={Hilbert Modular Surfaces},
 series={Ergebnisse der Mathematik und ihrer Grenzgebiete (3)},
 volume={16},
  publisher={Springer-Verlag},
  place={Berlin, New York},
  date={1988},
  pages={x+291},
  isbn={0-387-17601-2},   
}

\end{biblist}
\end{bibdiv}

\end{document}